\numberwithin{equation}{section}
\theoremstyle{plain}
\newtheorem{theorem}{Theorem}[section]
\newtheorem{proposition}[theorem]{Proposition}
\newtheorem{problem}[theorem]{Problem}
\theoremstyle{definition}
\newtheorem{definition}[theorem]{Definition}
\newtheorem{example}[theorem]{Example}
\theoremstyle{remark}
\newtheorem{remark}[theorem]{Remark}
\newcommand{\A}{\mathcal{A}}
\newcommand{\Z}{\mathbb{Z}}
\newcommand{\R}{\mathbb{R}}
\newcommand{\C}{\mathbb{C}}
\newcommand{\scF}{\mathcal{F}}
\newcommand{\scL}{\mathcal{L}}
\newcommand{\CG}{\operatorname{\mathcal{Z}}}
\newcommand{\cl}{\gamma}
\newcommand{\Uqsl}{U_q(\frak{sl}_2)}
\newcommand{\ch}{\operatorname{ch}}
\newcommand{\rank}{\operatorname{rank}}
\newcommand{\codim}{\operatorname{codim}}
\newcommand{\End}{\operatorname{End}}
\newcommand{\Ker}{\operatorname{Ker}}
\renewcommand{\Im}{\operatorname{Im}}
\begin{document}

\title[$q$-Aomoto complex]{$q$-deformation of Aomoto complex}

\begin{abstract}
A degree one element of the Orlik-Solomon algebra of a 
hyperplane arrangement defines a cochain complex 
known as the Aomoto complex. The Aomoto complex 
can be considerd as the ``linear approximation'' of the 
twisted cochain complex with coefficients in 
a complex rank one local system. 

In this paper, we discuss $q$-deformations of the Aomoto complex. 
The $q$-deformation is defined by replacing the entries of 
representation matrices of the coboundary maps with their 
$q$-analogues. 
While the resulting maps do not generally define cochain complexes, 
for certain special basis derived from 
real structures, the $q$-deformation becomes again 
a cochain complex. 
Moreover, it exhibits universality in the sense that 
any specialization of $q$ to a complex number yields 
the cochain complex computing the corresponding local system 
cohomology group. 
\end{abstract}

\author{Masahiko Yoshinaga}
\address{Masahiko Yoshinaga, 
Osaka University}
\email{yoshinaga@math.sci.osaka-u.ac.jp}

\thanks{This work was partially supported by 
JSPS KAKENHI Grant Numbers JP23H00081}

\subjclass[2010]{Primary 52C35, Secondary 20F55}
\keywords{Hyperplane arrangements, Aomoto complex, 
local system cohomology, $q$-deformation}

\dedicatory{Dedicated to 
Prof. Alex Suciu and Prof. Lauren\c{t}iu P\u{a}unescu \\
on the occasion of their 70th birthday}

\date{\today}

\maketitle


\section{Introduction}
\label{sec:intro}

Let $\A$ be a hyperplane arrangement in $\C^\ell$. 
We denote the complement by 
$M(\A)=\C^\ell\smallsetminus\bigcup_{H\in\A}H$. 
These spaces hold significant importance in  
topology and various areas of mathematics. 
One of the central themes in 
the study of hyperplane arrangements is the relationship 
between the topological structures of $M(\A)$ and 
the combinatorial structures of $\A$. 

The cohomology ring of $M(\A)$ has an expression as 
the Orlik-Solomon algebra (\cite{orl-ter}) which is 
defined in a purely combinatorial way by the intersection poset 
$L(\A)$ of $\A$. In some cases, the local system cohomology 
groups and Betti numbers of the covering spaces are also 
described in combinatorial way. Nevertheless, 
these problems remain still open in general 
(see \S \ref{sec:pbm} for further details). 

In this paper, we focus on complex rank $1$ local systems. 
One of the key notions in exploring local systems is the 
\emph{Aomoto complex} $(A_{\Z}^\bullet(\A), \omega)$, 
originally introduced in the context of higher-dimensional 
generalizations of hypergeometric integrals \cite{ao-kita}. 
The Aomoto complex is defined by using 
the Orlik-Solomon algebra, hence it is described by 
the intersection poset. It is known that the cohomology of the 
Aomoto complex approximates 
the local system cohomology groups, namely, it can be 
considered as the ``linearlized version'' of local system 
cochain complex. 

The purpose of this paper is to appropriately deform 
the Aomoto complex in a such a way that it 
recovers arbitrary local system cohomology groups 
as its specializations (``universality''). 
The idea is to construct the universal cochain complex as a 
``$q$-deformation of the Aomoto complex'' (Figure \ref{fig:qdef}) 
which: 
\begin{itemize}
\item 
is a coshain complex over the ring of 
Laurent polynomials invariant under the involution 
$\CG=\Z[q^{1/2}, q^{-1/2}]^{\langle\iota\rangle}$ 
(see \S \ref{sec:q} for details), 
\item 
reverts to the Aomoto complex as $q\to 1$, 
\item 
can compute the local system cohomology group by specializing 
$q$ to a complex number $q_0\neq 0, 1$. 
\end{itemize}

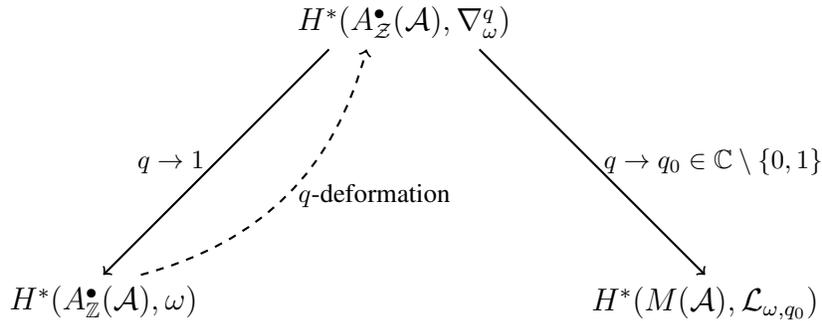
\begin{figure}[htbp]
\centering
\begin{tikzpicture}


\draw (0,0) node[below] {$H^*(A_{\Z}^\bullet(\A), \omega)$};
\draw (4,3) node[above] {$H^*(A_{\CG}^\bullet(\A), \nabla_\omega^q)$};
\draw[thick, <-] (0,0)-- node[left] {\footnotesize $q\to 1$} (3,3);
\draw[thick, dashed, ->] (0.5,0) to [out=15, in=255] node[right] {\footnotesize $q$-deformation} (3.5,3);

\draw[thick, ->] (5,3)-- node[right] {\footnotesize $q\to q_0\in\C\setminus\{0, 1\}$} (8,0) node[below] {$H^*(M(\A), \scL_{\omega, q_0})$};


\end{tikzpicture}
\caption{$q$-deformation and specializations of Aomoto complex} 
\label{fig:qdef}
\end{figure}

The paper is organized as follows. In \S \ref{sec:q}, we recall the 
notion of $q$-integers and the relations among them. 
In \S \ref{sec:pbm}, we recall basic facts on hyperplane 
arrangements and motivating problems related to local system 
cohomology groups and covering spaces. 
In \S \ref{sec:main}, we consider $q$-deformations of 
the Aomoto complex. First we see that a naive 
$q$-deformation does not maintain being a cochain complex 
in general. 
Therefore, we need to choose carefully the 
basis of the Aomoto complex. We observe that, for some bases, 
the canonical $q$-deformation becomes again a cochain complex. 
Then we formulate the main result that complexified real 
line arrangements have an ideal basis such that the canonical 
$q$-deformation becomes the universal cochain complex. 
The proof relies on previous works. 
In \S \ref{sec:ex}, we exhibit examples. We compute local system 
cohomology groups for certain local systems on the deleted 
$B_3$-arrangement. We also discuss further problems. 

\section{$q$-integers}

\label{sec:q}

\subsection{$q$-analogue of integers}
The Laurent polynomial in the variable $q^{1/2}$ 
\[
[n]_q:=\frac{q^{\frac{n}{2}}-q^{-\frac{n}{2}}}{q^{\frac{1}{2}}-q^{-\frac{1}{2}}}=
q^{\frac{n-1}{2}}+
q^{\frac{n-3}{2}}+\cdots+
q^{\frac{1-n}{2}}
\]
is called the \emph{$q$-analogue of an integer $n\in\Z$} 
(or the \emph{canonical $q$-deformation} of $n$). 
(For instance, $[0]_q=0, [1]_q=1, [2]_q=q^{\frac{1}{2}}+q^{-\frac{1}{2}}, 
[3]_q=q^{1}+1+q^{-1}, 
[4]_q=q^{\frac{3}{2}}+q^{\frac{1}{2}}+q^{-\frac{1}{2}}+q^{-\frac{3}{2}}$). 
Note that the assignment $n\longmapsto [n]_q$ 
is not a homomorphism. 

Define the involution $\iota:\Z[q^{1/2}, q^{-1/2}]\to
\Z[q^{1/2}, q^{-1/2}]$ by $\iota(q^{1/2})=q^{-1/2}$. 
Clearly $[n]_q$ is $\iota$-invariant, hence 
$[n]_q\in\Z[q^{1/2}, q^{-1/2}]^{\langle\iota\rangle}$. 
Conversely, we have 
the following. 

\begin{proposition}
\label{prop:inv}
The invariant submodule 
$\Z[q^{1/2}, q^{-1/2}]^{\langle\iota\rangle}$ is freely generated by 
$[n]_q$ ($n\in\Z_{>0}$) as $\Z$-module, hence 
\begin{equation}
\label{eq:gen}
\Z[q^{1/2}, q^{-1/2}]^{\langle\iota\rangle}=
\bigoplus_{n=1}^\infty\Z\cdot [n]_q. 
\end{equation}
\end{proposition}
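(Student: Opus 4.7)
The plan is to exhibit a manifest $\Z$-basis of the invariant submodule and then check that the change-of-basis matrix to $\{[n]_q\}_{n\geq 1}$ is unitriangular, hence invertible over $\Z$.

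First, I would describe $\Z[q^{1/2},q^{-1/2}]^{\langle\iota\rangle}$ explicitly. Any Laurent polynomial can be written uniquely as $\sum_{k\in\Z} a_k q^{k/2}$ with finite support, and since $\iota(q^{k/2})=q^{-k/2}$, invariance under $\iota$ is equivalent to the symmetry $a_k=a_{-k}$ for every $k$. Consequently, the family
\[
\mathcal{B} := \{1\}\cup \{q^{k/2}+q^{-k/2} : k\geq 1\}
\]
is evidently a $\Z$-basis of $\Z[q^{1/2},q^{-1/2}]^{\langle\iota\rangle}$.

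Next, I would compare $\mathcal{B}$ with $\{[n]_q\}_{n\geq 1}$. Using the explicit expansion of $[n]_q$ given just before the proposition and adopting the convention $[0]_q:=0$, a direct computation yields $[1]_q=1$ and, for every $n\geq 2$,
\[
[n]_q - [n-2]_q = q^{(n-1)/2}+q^{-(n-1)/2}.
\]
These identities express every element of $\mathcal{B}$ as an integer combination of the $[n]_q$, so $\{[n]_q\}_{n\geq 1}$ spans the invariant submodule.

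Linear independence is immediate from the observation that the highest power of $q^{1/2}$ occurring in $[n]_q$ is $q^{(n-1)/2}$ with coefficient $1$, and that these leading degrees are pairwise distinct over $n\geq 1$. Any nontrivial finite $\Z$-linear combination $\sum c_n [n]_q$ therefore carries a nonzero term of maximal degree and cannot vanish. No step presents a real obstacle: the whole argument reduces to the triangularity of the transition matrix between $\mathcal{B}$ and $\{[n]_q\}_{n\geq 1}$.
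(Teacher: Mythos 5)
The paper states Proposition \ref{prop:inv} without proof, treating it as elementary, so there is no ``paper's argument'' to compare against. Your argument is correct and fills the gap cleanly: identifying the invariant submodule as the symmetric Laurent polynomials with manifest basis $\mathcal{B}=\{1\}\cup\{q^{k/2}+q^{-k/2}:k\geq 1\}$, and then observing the telescoping identity $[n]_q-[n-2]_q=q^{(n-1)/2}+q^{-(n-1)/2}$ (with $[0]_q=0$), gives a unitriangular change of basis, from which both spanning and $\Z$-linear independence of $\{[n]_q\}_{n\geq 1}$ follow at once. The separate leading-degree argument you give for independence is redundant given the unitriangularity observation, but it is correct and does no harm.
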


\subsection{Clebsch-Gordan relation}

\begin{proposition}
The $q$-analogues of integers satisfy the following. 
\label{prop:qrel}
\begin{itemize}
\item[(1)] 
For any $n\in\Z$, 
\begin{equation}
\label{eq:minus}
[-n]_q=-[n]_q.
\end{equation} 
\item[(2)] 
Let $m, n\in\Z$. Suppose $n> 0$. Then 
\begin{equation}
\label{eq:cg}
[m]_q[n]_q=[m+n-1]_q+[m+n-3]_q+\cdots +[m-n+1]_q. 
\end{equation}
\end{itemize}
\end{proposition}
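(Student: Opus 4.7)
The plan is a direct computation from the closed-form definition of $[n]_q$, treating the two parts essentially independently.

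For part (1), I would simply substitute $-n$ into the defining formula:
\[
[-n]_q=\frac{q^{-n/2}-q^{n/2}}{q^{1/2}-q^{-1/2}}=-\frac{q^{n/2}-q^{-n/2}}{q^{1/2}-q^{-1/2}}=-[n]_q.
\]
This is valid for all $n\in\Z$ without any case distinction, because it uses only the quotient expression, not the geometric-sum expansion (which is only directly available for $n>0$).

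For part (2), since $n>0$, I plan to start from the right-hand side and use the geometric-sum expression $[k]_q=q^{(k-1)/2}+q^{(k-3)/2}+\cdots+q^{-(k-1)/2}$ in reverse. Writing each summand in its quotient form gives
\[
\sum_{k=0}^{n-1}[m+n-1-2k]_q=\frac{1}{q^{1/2}-q^{-1/2}}\sum_{k=0}^{n-1}\left(q^{(m+n-1-2k)/2}-q^{-(m+n-1-2k)/2}\right).
\]
I would then split the inner sum as $S_1-S_2$. From $S_1$ one factors out $q^{m/2}$ to recognize the remaining sum $q^{(n-1)/2}+q^{(n-3)/2}+\cdots+q^{-(n-1)/2}$ as $[n]_q$, and similarly $S_2=q^{-m/2}[n]_q$. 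Combining,
\[
\sum_{k=0}^{n-1}[m+n-1-2k]_q=\frac{q^{m/2}-q^{-m/2}}{q^{1/2}-q^{-1/2}}\,[n]_q=[m]_q[n]_q,
\]
which is \eqref{eq:cg}.

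There is no substantial obstacle; the argument is purely symbolic manipulation. The only point requiring a little care is the asymmetric hypothesis $n>0$ in (2): the geometric expansion of $[n]_q$ is used for the factor indexed by $n$, while $m$ is allowed to be any integer, so only one of the two factors needs the positivity. An alternative would be to expand $(q^{m/2}-q^{-m/2})(q^{n/2}-q^{-n/2})$ and recover the identity by a telescoping argument, but the factoring approach above is cleaner and avoids introducing auxiliary cancellations.
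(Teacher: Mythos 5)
Your proof is correct, and it is the straightforward direct verification the paper has in mind when it declares the proposition ``elementary'' and omits the proof. The paper does not give its own argument, so there is nothing to compare line by line; the only derivation it sketches is the representation-theoretic one in Remark~\ref{rem:cg}, which recovers \eqref{eq:cg} by taking the trace of $K$ in the Clebsch--Gordan decomposition \eqref{eq:cgdec} of $\Uqsl$-modules. Your factoring argument (pulling $q^{\pm m/2}$ out of each of the two geometric sums to recognize $[n]_q$, then re-assembling $[m]_q$) is cleaner and fully self-contained, whereas the paper's remark situates the identity conceptually and motivates the categorification question posed at the end of \S\ref{sec:ex}. You also correctly isolated the one delicate point: the geometric-sum expansion of $[k]_q$ is only valid as written for $k>0$, which is precisely why the hypothesis $n>0$ appears in part~(2) and why only the factor indexed by $n$ needs it, while part~(1) uses only the quotient form and thus holds for every $n\in\Z$.
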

The second assertion is called the Clebsch-Gordan relation. 
Since it is elementary, we omit the proof. 

\begin{remark}
\label{rem:cg}
Let us briefly describe the relation between the formula 
(\ref{eq:cg}) and representations of the quantum group 
$\Uqsl$. First, $\Uqsl$ is defined, as the $\C$-algebra, generated 
by the four variables $E, F, K, K^{-1}$ with the relations 
(\cite[Definition VI. 1.1]{kas-qg}) 
\[
\begin{split}
&KK^{-1}=K^{-1}K=1,\\
&KEK^{-1}=q^2 E,\ KFK^{-1}=q^{-2}F,\\
&EF-FE=\frac{K-K^{-1}}{q-q^{-1}}. 
\end{split}
\]
There exists an $(n+1)$-dimensional irreducible representation 
$\rho_n:\Uqsl\longrightarrow\End(V_n)$ which is called the 
highest weight representation with the highest weight $q^n$ 
(\cite[Theorem VI. 3.5]{kas-qg}). 
The trace of $\rho_n(K)$ can be easily computed as 
(\cite[page 129]{kas-qg})
\[
\operatorname{Tr}(\rho_n(K))=q^n+q^{n-2}+\dots+q^{-n}=[n+1]_{q^2}. 
\]
The tensor products of these representations are not irreducible. 
The irreducible decomposition is described by the following 
Clebsch-Gordan formula (\cite[Theorem VII. 7.2]{kas-qg}) 
\begin{equation}
\label{eq:cgdec}
V_n\otimes V_m\simeq 
V_{n+m}\oplus V_{n+m-2}\oplus\cdots\oplus V_{n-m}, 
\end{equation}
for $n\geq m\geq 0$. By comparing the trace of $K$ for both sides, 
we obtain the formula (\ref{eq:cg}). 
\end{remark}

Next, we prove that relations among $q$-integers are generated 
by the relations (\ref{eq:minus}) and (\ref{eq:cg}) together with 
$[0]_q=0, [1]_q=1$. More precisely, we have the following. 

\begin{proposition}
\label{prop:ring}
Let $\Z[x_n; n\in\Z]$ 
be the polynomial ring generated by $x_n (n\in\Z)$, 
and let $I$ be the ideal generated by the following elements. 
\[
\begin{cases}
(i) \ x_0, \\
(ii) \ x_1-1, \\
(iii) \ x_n+x_{-n}, \ \ (n\in\Z)\\
(iv) \ x_m x_n-(x_{m+n-1}+x_{m+n-3}+\dots+x_{m-n+1}), \ \ 
(m, n\in\Z, m\geq n>0). 
\end{cases}
\]
Then, the assignment $\varphi(x_n)=[n]_q$ induces an isomorphism 
of $\Z$-algebras 
\[
\varphi: 
\Z[x_n; n\in\Z]/I \stackrel{\simeq}{\longrightarrow} 
\Z[q^{1/2}, q^{-1/2}]^{\langle\iota\rangle}. 
\]
\end{proposition}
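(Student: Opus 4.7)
The plan is to show $\varphi$ is a well-defined ring homomorphism, then prove that $\{x_n\mid n\geq 1\}$ spans $R:=\Z[x_n; n\in\Z]/I$ as a $\Z$-module, and finally deduce from Proposition \ref{prop:inv} that $\varphi$ must be an isomorphism. Well-definedness amounts to checking that $[n]_q$ satisfies each of the four generating families of $I$: (i), (ii) are immediate, (iii) is (\ref{eq:minus}), and (iv) is the Clebsch--Gordan identity (\ref{eq:cg}) of Proposition \ref{prop:qrel}.

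For the spanning statement I would argue by induction on the degree $d$ of a monomial in the $x_n$'s. For $d=1$, relation (i) gives $x_0=0$ and (iii) gives $x_{-n}=-x_n$, so every $x_m$ lies in the $\Z$-span $M$ of $\{x_n\mid n\geq 1\}$. For $d=2$, commutativity together with (iii) lets me assume $a\geq b\geq 0$; the case $b=0$ vanishes by (i), and when $a\geq b>0$ relation (iv) expresses $x_a x_b$ as a sum of $x_c$ with $c\in\{a-b+1, a-b+3,\dots, a+b-1\}$, all of which are $\geq 1$. For $d\geq 3$, factoring a monomial as $(x_{m_1}\cdots x_{m_{d-1}})\cdot x_{m_d}$ and applying the induction hypothesis to the first factor reduces to a $\Z$-combination of products $x_{k}\cdot x_{m_d}$, each of which lies in $M$ by the $d=2$ case.

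The spanning step furnishes a $\Z$-module surjection $\bigoplus_{n\geq 1}\Z\cdot x_n \twoheadrightarrow R$, and its composition with $\varphi$ sends $x_n\mapsto [n]_q$. By Proposition \ref{prop:inv} this composition is an isomorphism of $\Z$-modules, which forces both maps in the composition to be isomorphisms; in particular $\varphi$ is an isomorphism of $\Z$-algebras.

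The main obstacle is the bookkeeping in the spanning step: one must organise the reductions (essentially a rewriting system on monomials) so that each application of (i)--(iv) strictly decreases a well-chosen complexity, while keeping the sign from (iii) and the index condition $m\geq n>0$ of (iv) correctly tracked. The rest of the argument is purely formal---the content of the proposition is that (i)--(iv) form a complete set of defining relations for $[n]_q$, and that content is exactly captured by the spanning verification combined with the freeness of $\{[n]_q\}_{n\geq 1}$ from Proposition \ref{prop:inv}.
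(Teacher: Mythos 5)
Your argument is correct and, at bottom, the same as the paper's: reduce any element of $\Z[x_n;n\in\Z]/I$ to a $\Z$-linear combination of $1$ and $x_n$ ($n\geq 2$) using the relations, then conclude from the $\Z$-linear independence of $\{[n]_q : n\geq 1\}$ given by Proposition~\ref{prop:inv}. The only difference is expository: you spell out the degree-induction behind the spanning (normal-form) step, which the paper compresses into the single phrase ``using the generators of the ideal $I$, we may assume $g(x_i)=\sum_{i\geq 2}a_ix_i+b$.''
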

\begin{proof}
By Proposition \ref{prop:qrel} and Proposition \ref{prop:inv}, 
the map $\varphi: \Z[x_n; n\in\Z]/I \to 
\Z[q^{1/2}, q^{-1/2}]^{\langle\iota\rangle}$ is a well-defined 
surjective homomorphism. 

It remains to show that $\varphi$ is injective. 
Suppose $g(x_i)\in\Ker(\varphi)$. Using the generators of 
the ideal $I$, we may assume that $g(x_i)$ is of the form 
\[
g(x_i)=\sum_{i\geq 2} a_ix_i+b, 
\]
$a_i, b\in\Z$ ($a_i$ is zero except for finitely many $i$). 
By the assumption, $\sum_{i\geq 2}a_i[i]_q+b=0$ in 
$\Z[q^{1/2}, q^{-1/2}]^{\langle\iota\rangle}$. Since $[n]_q$ ($n\geq 1$) 
are linearly independent over $\Z$, we have $a_i=b=0$. 
\end{proof}
Denote the algebra in Proposition \ref{prop:ring} by 
\begin{equation}
\CG:=\Z[x_n; n\in\Z]/I 
\simeq
\Z[q^{1/2}, q^{-1/2}]^{\langle\iota\rangle}. 
\end{equation}
There is a natural ring homomorphism 
\begin{equation}
\cl:\CG\longrightarrow\Z 
\end{equation}
defined by $x_n\longmapsto n$, 
which is equivalent to $q\longmapsto 1$. 

\begin{definition}
\begin{itemize}
\item[$(1)$] 
A \emph{$q$-deformation} of an integer $n\in\Z$ is an 
element $\widetilde{n}\in\CG$ such that 
$\cl(\widetilde{n})=n$. 
\item[$(2)$] 
Among others, the $q$-deformation 
$n\longmapsto n\cdot[1]_q$ is called the \emph{trivial $q$-deformation}, 
and $n\longmapsto [n]_q$ is called the 
\emph{canonical $q$-deformation}. 
\item[$(3)$] 
A $q$-deformation of an integer matrix $A=(a_{ij})_{ij}\in M_{m,n}(\Z)$ 
is a matrix $\widetilde{A}=(\widetilde{a_{ij}})_{ij}\in M_{m,n}(\CG)$ 
such that $\cl(\widetilde{a_{ij}})=a_{ij}$. 
\item[$(4)$] 
A $q$-deformation of a homomorphism $f:M_1\longrightarrow M_2$ of 
$\Z$-modules $M_i (i=1,2)$ is a homomorphsim 
$\widetilde{f}:\widetilde{M_1}\longrightarrow \widetilde{M_2}$ of 
$\CG$-modules together with isomorphisms 
$\widetilde{M_i}\otimes_{\CG} \Z\simeq M_i (i=1,2)$ which commute 
the following diagram. 
\[
\begin{CD}
\widetilde{M_1}\otimes_{\CG} \Z @>\widetilde{f}\otimes_{\CG}\Z >>
\widetilde{M_2}\otimes_{\CG} \Z\\
@V\simeq VV @VV\simeq V\\
M_1 @>>f> M_2
\end{CD}
\]
\item[(5)] 
Let $f: \Z^m\longrightarrow\Z^n$ be a homomorphism of 
free abelian groups represented by a matrix $A=(a_{ij})_{ij}$. 
The \emph{canonical deformation} of $f$ is 
$\widetilde{f}:\CG^m\longrightarrow\CG^n$ defined by 
the matrix $\widetilde{A}=([a_{ij}]_q)_{ij}$. 
\end{itemize}
\end{definition}

\begin{remark}
There are many $q$-deformations other than the trivial and 
the canonical deformations, e.g., 
$n\longmapsto [n-2]_q+[2]_q$. 
\end{remark}

\section{Problems related to Local system cohomology groups}

\label{sec:pbm}

\subsection{Hyperplane arrangements and Aomoto complex}

\label{subsec:aom}

Let $V=\C^\ell$. An arrangement of hyperplanes 
is a set $\A=\{H_1, \dots, H_n\}$ consisting of finitely many 
affine hyperplanes in $V$. We denote its complement by 
$M(\A)=\C^\ell\smallsetminus\bigcup_{H\in\A}H$. 
Let $\alpha_H$ be a defining linear equation of the hyperplane $H$. 
Then the first cohomology group $H^1(M(\A), \Z)$ is a free 
abelian group generated by 
\[
e_i=\frac{1}{2\pi\sqrt{-1}} d\log\alpha_{H_i}\in H^1(M(\A), \Z). 
\]
Dually, the first homology group $H_1(M(\A), \Z)$ is a free abelian 
group generated by the meridian cycles $\gamma_i$ of 
each hyperplane $H_i$. 

The cohomology ring $H^*(M(\A), \Z)$ is generated by $e_1, \dots, e_n$. 
Furthermore, the following presentation (by Orlik and Solomon) is 
known \cite{orl-ter}. Let $E=\bigoplus_{i=1}^n\Z e_i$ be the free abelian 
group generated by $e_1, \dots, e_n$ and $\wedge E$ be its exterior 
algebra. There are two types of algebraic relations 
among $e_1, \dots, e_n$ in $H^*(M(\A), \Z)$. The first one is 
\begin{equation}
e_{i_1}e_{i_2}\cdots e_{i_p}=0,  
\end{equation}
for $H_{i_1}\cap\dots\cap H_{i_p}=\emptyset$. 
The second one is 
\begin{equation}
\partial(e_{i_1}e_{i_2}\dots e_{i_p}):=
\sum_{\alpha=1}^p
(-1)^{\alpha-1}
e_{i_1}\dots\widehat{e_{i_\alpha}}\dots e_{i_p}=0, 
\end{equation}
for dependent subset $\{H_{i_1}, \dots, H_{i_p}\}$, namely, 
the subset of $\A$ satisfying 
($H_{i_1}\cap\dots\cap H_{i_p}\neq\emptyset$) 
$\codim H_{i_1}\cap\dots\cap H_{i_p}<p$. 

Let $I_\A$ be the graded ideal of the exterior algebra $\wedge E$ 
generated by 
\[
\begin{cases}
e_{i_1}e_{i_2}\dots e_{i_p} & \mbox{ for } H_{i_1}\cap\dots\cap H_{i_p}=\emptyset, \\
\partial(e_{i_1}e_{i_2}\dots e_{i_p})& \mbox{ for }
\codim H_{i_1}\cap\dots\cap H_{i_p}<p. 
\end{cases}
\]
Denote by $A_\Z^\bullet(\A)=E/I_{\A}$ the quotient, 
which is a graded algebra 
called the \emph{Orlik-Solomon algebra} of $\A$. 
Note that the Orlik-Solomon algebra is defined by only using 
the poset structure of the non-empty intersections 
\[
L(\A)=\{ \bigcap_{H\in\mathcal{B}}H\neq\emptyset\mid 
\mathcal{B}\subset\A\}. 
\]
The Orlik-Solomon algebra is known to be isomorphic to 
the cohomology ring, namely, we have the algebra isomorphism 
\[
H^\bullet (M(\A), \Z)\simeq
A_\Z^\bullet(\A). 
\]
Let $\omega=\sum_{i=1}^n a_ie_i\in A_\Z^1(\A)$. Then, since 
$\omega\wedge\omega=0$, 
\[
\begin{CD}
(A_\Z^\bullet(\A), \omega):\ \  \cdots 
@>\omega\wedge>> A_\Z^k(\A)
@>\omega\wedge>> A_\Z^{k+1}(\A)
@>\omega\wedge>> \cdots
\end{CD}
\]
forms a cochain complex, which is called the \emph{Aomoto complex}. 

\subsection{Local system cohomology group}
\label{subsec:locsys}

Let 
\[
\rho:\pi_1(M(\A))\longrightarrow\C^\times
\]
be a group homomorphism. 
Since $\C^\times$ is an abelian group, 
the map $\rho$ factors through the abelianization $H_1(M(\A), \Z)$. 
Hence, $\rho$ is determined by $q_i=\rho(\gamma_i)\in\C^\times$ 
($i=1, \dots, n$), where $\gamma_i$ is the meridian cycle around 
the hyperplane $H_i$. 
Denote the associated complex rank one local system by 
$\scL_\rho$. The computation of the twisted cohomology group 
$H^k(M(\A), \scL_\rho)$ is one of the central problems in 
the topology of hyperplane arrangements. The following problem is 
still open. 
\begin{problem}
\label{pbm:loc}
Can one describe $\dim H^k(M(\A), \scL_\rho)$ in terms of 
the intersection poset $L(\A)$ of $\A$ and 
$(q_1, \dots, q_n)\in(\C^\times)^n$? 
\end{problem}
Roughly speaking, if $\rho$ is close to the trivial local system 
(equivalently, $(q_1, \dots, q_n)$ is close to $(1, 1, \dots, 1)$ 
in $(\C^\times)^n$), 
$H^k(M(\A), \scL_\rho)$ can be described combinatorially. 
More precisely, suppose $q_i=\exp(2\pi\sqrt{-1}\lambda_i)$, and 
let 
$\omega_\lambda=\lambda_1e_1+\dots+\lambda_ne_n\in A_\C^1(\A)$. 
Then there 
exists an open subset $(0, \dots, 0)\in U\subseteq\C^n$ such that 
if $\lambda=(\lambda_1, \dots, \lambda_n)\in U$, the Aomoto complex 
$(A_\C^\bullet(\A), \omega_\lambda)$ is quasi-isomorphic to the 
twisted de Rham complex (\cite{esv, stv}). In particular, 
\[
H^k(M(\A), \scL_\rho)\simeq H^k(A_\C^\bullet(\A), \omega_\lambda), 
\]
which is also known as the tangent cone theorem \cite{suc-tan}. 

However, it is also known that the Aomoto complex 
$(A_\C^\bullet(\A), \omega_\lambda)$ with complex coefficients 
can not describe even 
$H^1(M(\A), \scL_\rho)$ in general. In fact, the characteristic 
variety 
\[
V_1(\A)=\{(q_1, \dots, q_n)\in(\C^\times)^n\mid 
\dim H^1(M(\A), \scL_\rho)\geq 1\}
\]
may have tosion translated components that can not be 
detected by the Aomoto complex 
$(A_\C^\bullet(\A), \omega_\lambda)$ (\cite{suc-trans}). 

\subsection{Motivating problems on covering spaces}
\label{subsec:motiv}

One of the motivations to study twisted cohomology groups 
is the topology of covering spaces. 
Although the following problems 
are subcases of Problem \ref{pbm:loc}, it is worth 
noting separately by their importance in relations to 
the topology of covering spaces. 
\begin{problem}
\label{pbm:2tor}
Let $\rho:\pi_1(M(\A))\longrightarrow\{\pm1\}$ 
be a surjective homomorphism. 
Can one describe $\dim H^k(M(\A), \scL_\rho)$ in terms of 
$L(\A)$ and $\rho$? 
\end{problem}
Problem \ref{pbm:2tor} is related to double coverings as follows. 
The index $2$ subgroup $\Ker(\rho)\subset\pi_1(M(\A))$ 
determines a double covering $p:M(\A)^\rho\longrightarrow M(\A)$. 
It is known that 
\[
H^k(M(\A)^\rho, \C)\simeq 
H^k(M(\A), \C)\oplus H^k(M(\A), \scL_\rho). 
\]
Hence Problem \ref{pbm:2tor} is equivalent to describe the 
Betti numbers of the double covering in terms of combinatorial 
information. Related topics are recently actively studied by 
several researchers 
\cite{isy, liu-liu, lmw, liu-xie, suc-boc, sug-cdo, yos-dou}. 

\begin{problem}
\label{pbm:mil}
Define $\rho:\pi_1(M(\A))\longrightarrow\C^\times$ by 
$\rho(\gamma_i)=\exp\left(\frac{2\pi\sqrt{-1}}{n+1}\right)$. 
Can one describe $\dim H^k(M(\A), \scL_{\rho^{\otimes i}})$ 
($i=0, 1, \dots, n$) in terms of $L(\A)$? 
\end{problem}
The kernel $\Ker(\rho)$ is a subgroup of $\pi_1(M(\A))$ of 
index $(n+1)$, and determines a $\Z_{n+1}$-cyclic covering 
$p:F\longrightarrow M(\A)$. The space $F$ is known to be 
homeomorphic to the Milnor fiber of the coning of $\A$. 
It is known that the cohomology group of the Milnor fiber 
is decomposed as 
\[
H^k(F, \C)\simeq\bigoplus_{i=0}^n H^k(M(\A), \scL_{\rho^{\otimes i}}), 
\]
which is equivalent to the decomposition of the cohomology group 
into the monodromy eigenspaces. 
Hence Problem \ref{pbm:mil} is equivalent to the combinatorial 
description of the Betti numbers 
(more precisely, the dimension of monodromy eigenspaces) 
of the Milnor fiber of (a central) hyperplane arrangement. 
See \cite{suc-mil} for more on Milnor fibers. 

Recently, Papadima and Suciu \cite{ps-sp, ps-modular}
discovered that Problem \ref{pbm:mil} is deeply related to the 
Aomoto complex 
$(A_{\mathbb{F}_q}^{\bullet} (\A), \omega=e_1+\dots+e_n)$ 
over the finite field $\mathbb{F}_q$, where $q$ is a prime power. 
Actually, if $3|(n+1)$, 
and the multiplicities of the intersections of 
corresponding projective lines are not contained in 
$\{6, 9, 12, \dots, 3n, \dots\}$, 
then, the dimension of the nontrivial part is 
\[
\sum_{i=1}^n \dim H^1(M(\A), \scL_{\rho^{\otimes i}})=2\cdot
\dim_{\mathbb{F}_3}H^1(A_{\mathbb{F}_3}^{\bullet} (\A), e_1+\dots+e_n). 
\]
However, the rank of the cohomology group of the Aomoto complex 
over finite fields generally provides only an upper bound 
(\cite{yos-dou}). Instead, in the next section, 
we will investigate the $q$-deformation 
of the integral Aomoto complex $A_\Z^\bullet(\A)$. 

\section{Main results}

\label{sec:main}

\subsection{$q$-deformable universal basis of Aomoto complex}

\label{subsec:qaom}

Let $\A=\{H_1, \dots, H_n\}$ be a line arrangement in $\C^2$. 
Our approach is to consider local system cohomology groups via 
$q$-deformations of the Aomoto complexes. 
Let $\omega=\sum_{i=1}^n a_ie_i\in A_\Z^1(\A)$. 
Let $q_0\in\C^\times\smallsetminus\{1\}$. 
Define the homomorphism $\rho(\omega, q_0): 
\pi_1(M(\A))\longmapsto\C^\times$ as 
\[
\rho(\omega, q_0)(\gamma_i)=q_0^{a_i}. 
\]
We denote the associated local system by 
$\scL_{\rho(\omega, q_0)}$. The main result asserts that 
there exists a $q$-deformation of the Aomoto complex whose 
specialization to $q=q_0$ computes the local system cohomology group. 

Let 
\begin{equation}
\label{eq:basis}
\begin{cases}
1 &\in A_\Z^0(\A), \\
\eta_1, \dots, \eta_n&\in A_\Z^1(\A),\\ 
\theta_1, \dots, \theta_b&\in A_\Z^2(\A)
\end{cases}
\end{equation}
be a $\Z$-basis of the Aomoto complex ($b=\rank A_\Z^2(\A)$). 
Let $\omega\in A_\Z^1(\A)$. The coboundary map of the 
Aomoto complex is expressed as 
\[
\begin{split}
\omega\wedge 1&=s_1\eta_1+\dots+s_n\eta_n\\
\omega\wedge \eta_i&=t_{1i}\theta_1+\dots+t_{bi}\theta_b, 
\end{split}
\]
with $s_i, t_{ij}\in\Z$. Denote the coefficient matrices by 
$S(\omega)=(s_i)$ and $T(\omega)=(t_{ij})$. 
Since they are matrix representations of coboundary maps, 
we have the following relation 
\begin{equation}
\label{eq:matrixrel}
T(\omega)S(\omega)=
\begin{pmatrix}
t_{11}&t_{12}&\dots&t_{1n}\\
t_{21}&t_{22}&\dots&t_{2n}\\
\vdots&\vdots&\ddots&\vdots\\
t_{b1}&t_{b2}&\dots&t_{bn}
\end{pmatrix}
\begin{pmatrix}
s_1\\ s_2\\ \vdots\\ s_n
\end{pmatrix}
=0. 
\end{equation}
Generally, 
the canonical $q$-deformation breaks the 
relation (\ref{eq:matrixrel}) (see Example \ref{ex:3lines}). 
\begin{definition}
\label{def:canoq}
The basis (\ref{eq:basis}) is called 
\emph{canonically $q$-deformable} if 
\begin{equation}
[T(\omega)]_q[S(\omega)]_q:=
\begin{pmatrix}
[t_{11}]_q&[t_{12}]_q&\dots&[t_{1n}]_q\\
[t_{21}]_q&[t_{22}]_q&\dots&[t_{2n}]_q\\
\vdots&\vdots&\ddots&\vdots\\
[t_{b1}]_q&[t_{b2}]_q&\dots&[t_{bn}]_q
\end{pmatrix}
\begin{pmatrix}
[s_1]_q\\ [s_2]_q\\ \vdots\\ [s_n]_q
\end{pmatrix}
=0. 
\end{equation}
Then, we denote the $q$-deformed Aomoto complex 
\begin{equation}
\begin{CD}
0
@>>>
A_{\CG}^0(\A)
@> [S(\omega)]_q>>
A_{\CG}^1(\A)
@>[T(\omega)]_q>>
A_{\CG}^2(\A)
@>>>
0
\end{CD}
\end{equation}
by $(A_{\CG}^\bullet(\A), \nabla_\omega^q)$. 
\end{definition}

\begin{example}
\label{ex:3lines}
Let $\A=\{H_1, H_2, H_3\}$ be three lines as in Figure \ref{fig:3lines}. 
Note that by the definition of Orlik-Solomon algebra, 
$e_1e_3=e_1e_2+e_2e_3$. 
Let $\omega=e_1+e_2+e_3$ and consider the $q$-deformation of 
the Aomoto complex $(A_\Z^\bullet(\A), \omega)$. 
\begin{figure}[htbp]
\centering
\begin{tikzpicture}[scale=0.8]
\draw[thick] (0,0) node[below] {$H_3$} --(4,2);
\draw[thick] (2,0) node[below] {$H_2$} --(2,2);
\draw[thick] (4,0) node[below] {$H_1$} --(0,2);
\end{tikzpicture}
\caption{$\A=\{H_1, H_2, H_3\}$} 
\label{fig:3lines}
\end{figure}
\begin{itemize}
\item[(1)] 
Consider the basis 
$1\in A_\Z^0(\A), e_1, e_2, e_3\in A_\Z^1(\A)$, and 
$e_1e_2, e_2e_3\in A_\Z^2(\A)$. Then the coefficients matrices are 
\[
T(\omega)=
\begin{pmatrix}
-2&1&1\\
-1&-1&2
\end{pmatrix}, 
\mbox{ and } 
S(\omega)=
\begin{pmatrix}
1\\1\\1
\end{pmatrix}. 
\]
Since $[2]_q\neq [1]_q+[1]_q$, we have 
$[T(\omega)]_q[S(\omega)]_q\neq 0$. 
Therefore, the basis is not canonically $q$-deformable. 
\item[(2)] 
Consider the basis 
$1\in A_\Z^0(\A), 
\eta_1=e_1, \eta_2=e_1-e_2, \eta_3=e_2-e_3\in A_\Z^1(\A)$, and 
$e_1e_2, e_2e_3\in A_\Z^2(\A)$. Then the coefficients matrices are 
\[
T(\omega)=
\begin{pmatrix}
-2&-3&0\\
-1&0&-3
\end{pmatrix}, 
\mbox{ and } 
S(\omega)=
\begin{pmatrix}
3\\ -2\\ -1
\end{pmatrix}. 
\]
In this case, we have $[T(\omega)]_q[S(\omega)]_q=0$. 
Actually, the basis is 
a canonically $q$-deformable basis. 
\end{itemize}
\end{example}
Thus the canonical $q$-deformability depends on the 
choice of the basis. We also make the following definition. 
\begin{definition}
\label{def:univ}
A canonically $q$-deformable basis (\ref{eq:basis}) is called 
\emph{universal} if the $q$-deformation computes the local system 
cohomology group. Namely, for any $\omega\in A_\Z^1(\A)$ and 
$q_0\in\C^\times\smallsetminus\{1\}$, the cohomology 
\[
\frac{\Ker([T(\omega)]_{q_0}:A_\C^1(\A)\longrightarrow A_\C^2(\A))}
{\Im([S(\omega)]_{q_0}:A_\C^0(\A)\longrightarrow A_\C^1(\A))}
\]
is isomorphic to $H^1(M(\A), \scL_{\rho(\omega, q_0)})$. 
\end{definition}
Our main result is the following. 
\begin{theorem}
\label{thm:realmain}
Let $\A=\{H_1, \dots, H_n\}$ be a complexified real 
line arrangement. Then, there exists 
a universal canonically $q$-deformable basis of 
the Aomoto complex 
$1 \in A_\Z^0(\A), \eta_1, \dots, \eta_n\in A_\Z^1(\A),  
\theta_1, \dots, \theta_b\in A_\Z^2(\A)$. 
\end{theorem}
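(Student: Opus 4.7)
My approach is to construct the basis explicitly from the chamber combinatorics of the underlying real line arrangement $\A_\R$. Fix a generic flag $F_0 \subset F_1 \subset \R^2$, where $F_0$ is a point lying in a single unbounded chamber $C_0$ of $\A_\R$ and $F_1$ is an affine line transverse to every $H_i \cap \R^2$. Following the chamber stratification adapted to this flag (of Salvetti--Settepanella type, as used in the author's earlier work on minimal models for complexified real arrangements), the set of chambers $\ch(\A_\R)$ splits as $\{C_0\} \sqcup \ch^1 \sqcup \ch^2$, with $|\ch^1| = n$ and $|\ch^2| = b = \rank A_\Z^2(\A)$ (the bounded chambers). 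I will then take $\eta_1, \dots, \eta_n \in A_\Z^1(\A)$ and $\theta_1, \dots, \theta_b \in A_\Z^2(\A)$ to be the elements dual to $\ch^1$ and to the bounded chambers respectively, under the canonical pairing between chambers and Orlik--Solomon cochains provided by the minimal Morse-type model of $M(\A)$; a dimension count and the non-degeneracy of this pairing ensure that this does give a $\Z$-basis of $A_\Z^\bullet(\A)$.

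The next step is to invoke the central output of the previously cited chamber-complex computations: with this chamber basis, for $\omega = \sum a_i e_i$ and $q_0 \in \C^\times \setminus\{1\}$, the twisted coboundary of the local system $\scL_{\rho(\omega, q_0)}$ has matrix entries that coincide precisely with $[s_i]_{q_0}$ and $[t_{ij}]_{q_0}$, where $s_i$ and $t_{ij}$ are the integer entries describing the Aomoto coboundary $\omega \wedge (-)$ in the same basis. In other words, the twisted chamber differential is literally the canonical $q$-deformation of the Aomoto matrices $S(\omega)$ and $T(\omega)$ evaluated at $q = q_0$.

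Once this identification is in place, both assertions of the theorem follow cleanly. Canonical $q$-deformability is automatic, since $(d^{q_0})^2 = 0$ forces $[T(\omega)]_{q_0}[S(\omega)]_{q_0} = 0$ for every $q_0 \in \C^\times \setminus \{1\}$; because $\{[k]_q\}_{k\geq 1}$ is $\Z$-linearly independent in $\CG$ by Proposition \ref{prop:inv}, this pointwise identity forces the polynomial identity $[T(\omega)]_q [S(\omega)]_q = 0$ in $\CG$. Universality is then a tautology: specializing $q \to q_0$ returns the chamber-based twisted cochain complex, whose cohomology is $H^\bullet(M(\A), \scL_{\rho(\omega, q_0)})$. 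In the companion limit $q \to 1$, each $[k]_q$ specializes to $k$, recovering the Aomoto matrices $S(\omega), T(\omega)$, which reconfirms that the basis is compatible with the Aomoto structure (and verifies the commutativity required in part (4) of the definition of a $q$-deformation).

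The crux, and the main obstacle, is the second paragraph: showing that the entries of the twisted chamber differential are honestly the $q$-integers $[k]_q$, rather than arbitrary $\iota$-invariant Laurent polynomials with classical limit $k$. This is a rigidity feature special to complexified real arrangements, where each matrix entry arises as a signed count of oriented real transversals weighted by half-monodromies along real line segments, producing geometric sums of the form $q_0^{-(k-1)/2}(1 + q_0 + \cdots + q_0^{k-1}) = [k]_{q_0}$. Extracting this precise shape from the explicit chamber-theoretic formulas of the author's previous work, and matching it coefficient-by-coefficient with the Aomoto matrices in the basis above, is the technical core of the proof.
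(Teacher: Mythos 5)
Your proposal follows essentially the same route as the paper: take a generic flag $F^0\subset F^1\subset\R^2$, use the induced decomposition of chambers to get the chamber basis of the Orlik--Solomon algebra, and argue that in this basis the twisted chamber differential is precisely the canonical $q$-deformation of the Aomoto matrices, after which deformability and universality fall out. (Your observation that $[T(\omega)]_{q_0}[S(\omega)]_{q_0}=0$ for all $q_0$ forces the identity in $\CG$ is a slightly different, but perfectly valid, packaging of the last step; a nonzero Laurent polynomial cannot vanish on all of $\C^\times\smallsetminus\{1\}$, so one does not even need linear independence of the $[k]_q$.)

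The gap is the one you yourself flag: the claim that the twisted chamber differential has entries equal to $[\Gamma_{ij}]_{q_0}$ where $\Gamma_{ij}$ are the Aomoto entries. This is not literally what the cited chamber-theoretic results give. What \cite[Theorem~4.1]{yos-ch} provides is a factorization $\Gamma_{ij}=N_{ij}\cdot L_{ij}$ and a twisted differential whose $(i,j)$-entry in the raw chamber basis is $N_{ij}(q^{L_{ij}/2}-q^{-L_{ij}/2})$. The paper then rescales the degree-$k$ chamber basis by $(q^{1/2}-q^{-1/2})^k$ to convert this to $N_{ij}[L_{ij}]_q$, and only after invoking the crucial fact that for line arrangements ($\ell=2$) one has $N_{ij}\in\{0,\pm1\}$ (from \cite{yos-loc}) does the identity $N_{ij}[L_{ij}]_q=[N_{ij}L_{ij}]_q=[\Gamma_{ij}]_q$ hold. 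Your heuristic --- that the entry is a signed geometric sum, hence a $q$-integer --- is in the right spirit but does not isolate the real obstruction: for $\ell\geq 3$ the factor $N_{ij}$ can take any value in $\{1,0,-1,-2,\dots\}$, so $N_{ij}[L_{ij}]_q\neq [N_{ij}L_{ij}]_q$ in general and the canonical $q$-deformation genuinely fails (the paper's subsequent remark makes exactly this point). As written, your argument would seem to prove the theorem in arbitrary dimension, which is a sign that the $\ell=2$-specific input is missing. Supplying the factorization $\Gamma_{ij}=N_{ij}L_{ij}$, the basis rescaling, and the bound $|N_{ij}|\leq 1$ for line arrangements is precisely what closes the gap.
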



\subsection{Proof of Theorem \ref{thm:realmain}}

\label{subsec:proof}

The proof is based on the previous results 
\cite{yos-ch, yos-loc, yos-lef}. 
We first recall the a general result 
\cite[Theorem 4.1]{yos-ch} concerning 
the relation between the Aomoto complex and the twisted cohomology 
group for complexified real arrangements. 

Let $\A=\{H_1, \dots, H_n\}$ be an affine hyperplane arrangement 
in $\R^\ell$. Let $M(\A)$ be the complexified complement 
$M(\A)=\C^\ell\smallsetminus\bigcup_{H\in\A}H\otimes\C$. 
We describe the twisted cohomology $H^k(M(\A), \scL)$ by using 
chambers. 

Recall that a connected component of $\R^\ell\smallsetminus
\bigcup_{H\in\A}H$ is called a \emph{chamber}. Denote by 
$\ch(\A)$ the set of all chambers. Let 
\[
\scF:\ F^0\subset F^1\subset\dots\subset F^\ell=\R^\ell
\]
be a generic flag, that is, $F^k$ is a generic $k$-dimensional 
affine subspace. For a given generic flag $\scF$, we decompose 
the set of chambers. Let 
\[
\ch_{\scF}^k(\A)=\{C\in\ch(\A)\mid C\cap F^k\neq\emptyset, 
C\cap F^{k-1}=\emptyset\}, 
\]
for $0\leq k\leq \ell$, where $F^{-1}=\emptyset$. Then the 
set $\ch(\A)$ of chambers is decomposed into 
\[
\ch(\A)=\ch_{\scF}^0(\A)\sqcup\ch_{\scF}^1(\A)\sqcup\dots
\sqcup\ch_{\scF}^\ell(\A). 
\]
It is known that $\#\ch_{\scF}^k(\A)=\rank_{\Z}A_{\Z}^k(\A)$ 
\cite[Proposition 2.3.2]{yos-lef}. 
Furthermore, $\ch_{\scF}^k(\A)=\{C_1^k, \dots, C_{b_k}^k\}$ 
naturally determines a basis $[C_1^k], \dots, [C_{b_k}^k]\in A_\Z^k(\A)$ 
\cite[Theorem 3.2]{yos-ch}, which is called a 
\emph{chamber basis}. 

Let $\omega\in A_\Z^1(\A)$. We can express 
the coboundary map of the Aomoto complex using 
the chamber basis as 
\begin{equation}
\label{eq:aomchamber}
\omega\wedge[C^k_i]=\sum_{j}\Gamma_{ij}\cdot [C^{k+1}_j], 
\end{equation}
with $\Gamma_{ij}\in\Z$. 
Then \cite[Theorem 4.1]{yos-ch} asserts that 
there is a decomposition 
\begin{equation}
\label{eq:decompNL}
\Gamma_{ij}=N_{ij}\cdot L_{ij}, 
\end{equation}
with $N_{ij}, L_{ij}\in\Z$ such that the map 
\begin{equation}
\nabla_\omega: 
A_{\CG}^k(\A)\longrightarrow A_{\CG}^{k+1}(\A), \ 
[C^k_i]\longmapsto
\sum_{j}N_{ij}\cdot (q^{L_{ij}/2}-q^{-L_{ij}/2})\cdot [C^{k+1}_j]
\end{equation}
defines a cochain complex $(A_{\CG}^\bullet(\A), \nabla_\omega)$ 
whose cohomology group at 
$q=q_0$ is 
isomorphic to $H^k(M(\A), \scL_{\rho(\omega, q_0)})$ for 
any $q_0\in\C^\times\smallsetminus\{1\}$. 
Replace the chamber basis by 
$\eta^k_i=(q^{1/2}-q^{-1/2})^k[C^k_i]$. The map $\nabla_\omega$ 
is expressed as 
\begin{equation}
\nabla_\omega(\eta^k_i)=
\sum_{j}N_{ij}\cdot [L_{ij}]_q\cdot \eta^{k+1}_j, 
\end{equation}
which can be considered as a $q$-deformation of the 
coboundary map of the Aomoto complex 
(\ref{eq:aomchamber}). 

Now we consider the case $\ell=2$. The integer $N_{ij}$ is 
equal to (up to sign) the degree map introduced in 
\cite[Definition 6.3.3]{yos-lef}. When $\ell=2$, the value is 
known to be $N_{ij}\in\{0, \pm 1\}$ \cite[Definition 3.2 (1)]{yos-loc}. 
Thus we have 
\begin{equation}
\label{eq:qscalar}
N_{ij}\cdot [L_{ij}]_q=[N_{ij}\cdot L_{ij}]_q= [\Gamma_{ij}]_q, 
\end{equation}
which is nothing but the canonical $q$-deformation of the 
coboundary map of the Aomoto complex with respect to the 
chamber basis. This completes the proof of 
Theorem \ref{thm:realmain}. 

\begin{remark}
By the proof, the Aomoto complex $(A_\Z^\bullet(\A), \omega)$ 
has a $q$-deformation which has the universality in the sense 
of Definition \ref{def:univ}. However, it is not necessarily the 
canonical $q$-deformation with respect to certain basis. 
The reason is that if $\ell\geq 3$, there are examples such that 
$N_{ij}\leq -2$ (actually, it can take any value in 
$\{1,0.-1, -2, -3, \dots\}$). Thus (\ref{eq:qscalar}) does not hold 
in general. 
\end{remark}


\section{Examples and remarks}

\label{sec:ex}

\subsection{Deleted $B_3$-arrangement}

\label{subsec:delB3}

In this section, we exhibit some computations of $q$-deformations 
of the Aomoto complex with respect to the chamber basis for 
the deleted $B_3$-arrangement. 
The deleted $B_3$-arrangement was studied in detail by 
Suciu \cite{suc-fundam, suc-trans} as the first example which 
has a positive dimensional translated component in 
the characteristic variety. 

Let $\A=\{H_1, \dots, H_7\}$ be 
the line arrangement as in Figure \ref{fig:delB3}. 
Let 
\begin{equation}
\omega=e_1+2e_2+2e_3+e_4+2e_5+3e_6+4e_7\in A_\Z^1(\A). 
\end{equation}

\begin{figure}[htbp]
\centering
\begin{tikzpicture}[scale=0.8]


\draw[thick, black] (0,0) -- +(8,4) node[right]{$H_3$}; 
\draw[thick, black] (0,2) -- +(8,4) node[right]{$H_4$}; 
\draw[thick, black] (0,4) -- +(8,-4) node[right]{$H_1$}; 
\draw[thick, black] (0,6) -- +(8,-4) node[right]{$H_2$}; 
\draw[thick, black] (2,0) -- +(0,6) node[above]{$H_7$}; 
\draw[thick, black] (4,0) -- +(0,6) node[above]{$H_6$}; 
\draw[thick, black] (6,0) -- +(0,6) node[above]{$H_5$}; 

\end{tikzpicture}
\caption{The deleted $B_3$-arrangement} 
\label{fig:delB3}
\end{figure}
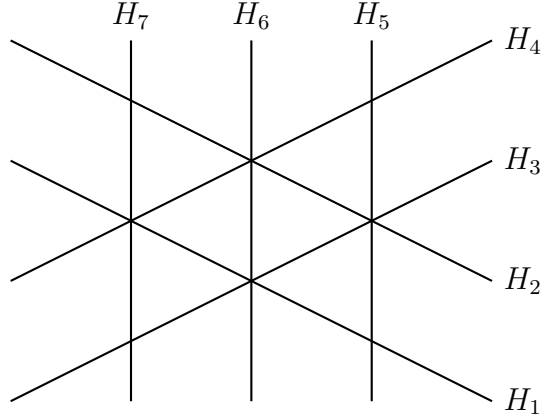

Take the generic flag $F^0\subset F^1$ as in Figure \ref{fig:flag}. 
Then the set of chambers is decomposed as 
\[
\begin{split}
\ch^0(\A)&=\{C_0\}\\
\ch^1(\A)&=\{C_1, C_2, \dots, C_7\}\\
\ch^2(\A)&=\{D_1, D_2, \dots, D_{12}\}
\end{split}
\]

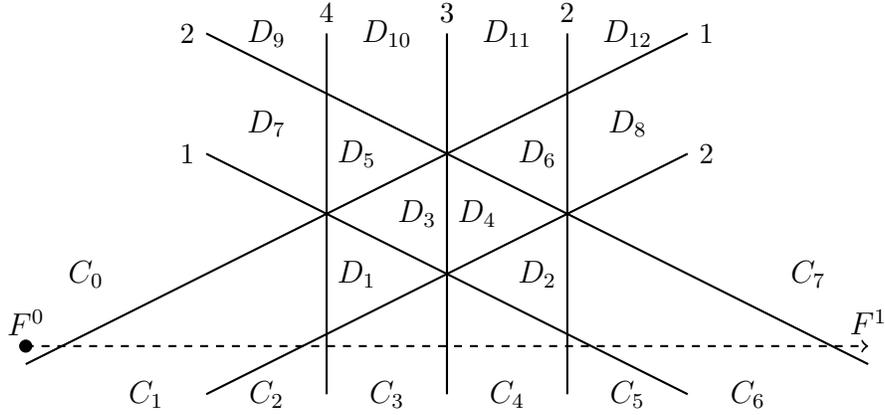
\begin{figure}[htbp]
\centering
\begin{tikzpicture}[scale=0.8]


\draw[thick, black] (0,0) -- +(8,4) node[right] {\small $2$}; 
\draw[thick, black] (-3,0.5) -- +(11,5.5) node[right] {\small $1$}; 
\draw[thick, black] (0,4) node[left] {\small $1$} -- +(8,-4); 
\draw[thick, black] (0,6) node[left] {\small $2$} -- +(11,-5.5); 
\draw[thick, black] (2,0) -- +(0,6) node[above] {\small $4$}; 
\draw[thick, black] (4,0) -- +(0,6) node[above] {\small $3$}; 
\draw[thick, black] (6,0) -- +(0,6) node[above] {\small $2$}; 

\filldraw[fill=black, draw=black] (-3,0.8) node[above] {$F^0$} circle (0.1);
\draw[thick, dashed, ->] (-3,0.8) -- (11,0.8) node[above] {$F^1$} ;

\draw (-2,2) node {$C_0$}; 
\draw (-1,0) node {$C_1$}; 
\draw (1,0) node {$C_2$}; 
\draw (3,0) node {$C_3$}; 
\draw (5,0) node {$C_4$}; 
\draw (7,0) node {$C_5$}; 
\draw (9,0) node {$C_6$}; 
\draw (10,2) node {$C_7$}; 

\draw (2.5,2) node {$D_1$}; 
\draw (5.5,2) node {$D_2$}; 
\draw (3.5,3) node {$D_3$}; 
\draw (4.5,3) node {$D_4$}; 
\draw (2.5,4) node {$D_5$}; 
\draw (5.5,4) node {$D_6$}; 
\draw (1,4.5) node {$D_7$}; 
\draw (7,4.5) node {$D_8$}; 
\draw (1,6) node {$D_9$}; 
\draw (3,6) node {$D_{10}$}; 
\draw (5,6) node {$D_{11}$}; 
\draw (7,6) node {$D_{12}$}; 

\end{tikzpicture}
\caption{Generic flag and chambers (the small numbers adjacent to 
lines are coefficients of $\omega$).} 
\label{fig:flag}
\end{figure}

We describe the Aomoto complex $(A_\Z^\bullet(\A), \omega)$ 
with respect to the chamber basis 
$[C_0]=1\in A_\Z^0(\A), [C_1], \dots, [C_7]\in A_\Z^1(\A), 
[D_1], \dots [D_{12}]\in A_\Z^2(\A)$. First, we have 
\[
\omega\cdot 1=[C_1]+3[C_2]+7[C_3]+10[C_4]+12[C_5]+13[C_6]+15[C_7].
\]
Next, the coefficient of $\omega\wedge[C_j]=\sum_{i=1}^{12}c_{ij}[D_i]$ 
is computed as follows (\cite{yos-ch, yos-loc}). 
\begin{equation}
\begin{array}{c||c|c|c|c|c|c|c|}
&C_1&C_2&C_3&C_4&C_5&C_6&C_7\\
\hline
\hline
D_1&-4&6&-2&&&&\\
\hline
D_2&&&&-1&3&-2&\\
\hline
D_3&-5&7&-3&&8&-7&\\
\hline
D_4&-8&10&&-3&5&-4&\\
\hline
D_5&&8&-4&&9&-8&\\
\hline
D_6&-10&12&&-5&7&&-4\\
\hline
D_7&&&&&13&-12&\\
\hline
D_8&-12&14&&&&&-2\\
\hline
D_9&&&&&15&&-12\\
\hline
D_{10}&&10&-6&&11&&-8\\
\hline
D_{11}&&13&&-6&8&&-5\\
\hline
D_{12}&&15&&&&&-3\\
\hline
\end{array}
\end{equation}
By Theorem \ref{thm:realmain}, this basis is univeral and 
canonically $q$-deformable. Hence, we have the following 
relation (we can also easily check by using 
Clebsch-Gordan relation (\ref{eq:cg})), 
\begin{equation}
\label{eq:qmatrix}
\begin{pmatrix}
-[4]_q&[6]_q&-[2]_q&0&0&0&0\\
0&0&0&-[1]_q&[3]_q&-[2]_q&0\\
-[5]_q&[7]_q&-[3]_q&0&[8]_q&-[7]_q&0\\
-[8]_q&[10]_q&0&-[3]_q&[5]_q&-[4]_q&0\\
0&[8]_q&-[4]_q&0&[9]_q&-[8]_q&0\\
-[10]_q&[12]_q&0&-[5]_q&[7]_q&0&-[4]_q\\
0&0&0&0&[13]_q&-[12]_q&0\\
-[12]_q&[14]_q&0&0&0&0&-[2]_q\\
0&0&0&0&[15]_q&0&-[12]_q\\
0&[10]_q&-[6]_q&0&[11]_q&0&-[8]_q\\
0&[13]_q&0&-[6]_q&[8]_q&0&-[5]_q\\
0&[15]_q&0&0&0&0&-[3]_q
\end{pmatrix}
\begin{pmatrix}
[1]_q\\
[3]_q\\
[7]_q\\
[10]_q\\
[12]_q\\
[13]_q\\
[15]_q
\end{pmatrix}
=0, 
\end{equation}
and any specialization $q\in\C^\times\smallsetminus\{1\}$ gives 
the local system cohomology $H^1(M(\A), \scL_{\rho(\omega, q)})$. 
Here, we consider two cases, $q=-1$ and $q=\exp(\pi\sqrt{-1}/3)$. 

\subsection{The case $q=-1$}

\label{subsec:minus1}

In this case, $q^{1/2}=(-1)^{1/2}=\sqrt{-1}$, and $q^{-1/2}=-\sqrt{-1}$, 
and $[n]_{-1}$ is as follows. 
\begin{equation}
[n]_{-1}=
\begin{cases}
1& \mbox{ if $n\equiv 1 \mod 4$,}\\
-1& \mbox{ if $n\equiv 3 \mod 4$}\\
0& \mbox{ if $n\equiv 0, 2 \mod 4$.}\\
\end{cases}
\end{equation}
The coefficient matrix (\ref{eq:qmatrix}) is specialized to 
\begin{equation}
\begin{pmatrix}
0&0&0&0&0&0&0\\
0&0&0&-1&-1&0&0\\
-1&-1&1&0&0&1&0\\
0&0&0&1&1&0&0\\
0&0&0&0&1&0&0\\
0&0&0&-1&-1&0&0\\
0&0&0&0&1&0&0\\
0&0&0&0&0&0&0\\
0&0&0&0&-1&0&0\\
0&0&0&0&-1&0&0\\
0&1&0&0&0&0&-1\\
0&-1&0&0&0&0&1
\end{pmatrix}
\end{equation}
This matrix has rank $4$. 
Hence $\dim H^1(M(\A), \scL_{\rho(\omega, -1)})=2$. 
Indeed, the character $(q_1, \dots, q_7)=(-1, 1, 1, -1, 1, -1, 1)$ 
is one of the two isolated points at which the twisted cohomology 
has $\dim H^1(M(\A), \scL)=2$ (\cite[Example 10.6]{suc-fundam}).

\subsection{The case $q=\exp(\pi\sqrt{-1}/3)$} 

\label{subsec:6root1}

Let $q=\zeta:=\exp(\pi\sqrt{-1}/3)$ be a primitive $6$-th root of $1$. 
In this case, $q^{1/2}=\exp(\pi\sqrt{-1}/6)$, 
$q^{-1/2}=\exp(-\pi\sqrt{-1}/6)$, and we have 
\begin{equation}
[n]_{\zeta}=
\begin{cases}
0& \mbox{ if $n\equiv 0, 6 \mod 12$,}\\
1& \mbox{ if $n\equiv 1, 5 \mod 12$,}\\
\sqrt{3}& \mbox{ if $n\equiv 2, 4 \mod 12$,}\\
2& \mbox{ if $n\equiv 3 \mod 12$,}\\
-1& \mbox{ if $n\equiv 7, 11 \mod 12$,}\\
-\sqrt{3}& \mbox{ if $n\equiv 8, 10 \mod 12$,}\\
-2& \mbox{ if $n\equiv 9 \mod 12$.}
\end{cases}
\end{equation}
The coefficient matrix (\ref{eq:qmatrix}) is specialized to 
\begin{equation}
\begin{pmatrix}
-\sqrt{3}&0&-\sqrt{3}&0&0&0&0\\
0&0&0&-1&2&-\sqrt{3}&0\\
-1&-1&-2&0&-\sqrt{3}&1&0\\
\sqrt{3}&-\sqrt{3}&0&-2&1&-\sqrt{3}&0\\
0&-\sqrt{3}&-\sqrt{3}&0&-2&-\sqrt{3}&0\\
\sqrt{3}&0&0&-1&-1&0&-\sqrt{3}\\
0&0&0&0&1&0&0\\
0&\sqrt{3}&0&0&0&0&-\sqrt{3}\\
0&0&0&0&2&0&0\\
0&-\sqrt{3}&0&0&-1&0&\sqrt{3}\\
0&1&0&0&-\sqrt{3}&0&-1\\
0&2&0&0&0&0&-2
\end{pmatrix}
\end{equation}
This matrix has rank $5$. Hence 
$\dim H^1(M(\A), \scL_{\rho(\omega, \zeta)})=1$. 
Indeed, the character 
$(q_1, \dots, q_7)=
(\zeta, \zeta^2, \zeta^2, \zeta, \zeta^2, \zeta^3, \zeta^4)$ 
(where $\zeta=\exp(\pi\sqrt{-1}/3)$) is a point in the translated 
component 
\[
\Omega=\{(t, -t^{-1}, -t^{-1}, t, t^2, -1, t^{-2})\mid t\in\C^\times\} 
\]
described in \cite[Example 10.6]{suc-fundam}.

\subsection{Concluding remarks}
\label{subsec:rem}

Once a universal canonically $q$-deformable basis of 
the Aomoto complex is obtained, the local system cohomology 
group $H^k(M(\A), \scL)$ is computed combinatorially 
(for rank one local systems expressed as 
$\scL=\scL_{\rho(\omega, q_0)}$). However, 
at this moment, the utilization of real structures (chambers) 
is necessary to select such a nice basis. 
Naturally, the following problems emerge. 

\begin{problem}
\begin{itemize}
\item[(1)] 
Can one characterize universal canonically $q$-deformable 
bases of the Aomoto complex combinatorially for complexfied 
real line arrangements? 
\item[(2)] 
Do such bases exist for higher-dimensional complex 
hyperplane arrangements? 
\end{itemize}
\end{problem}

A universal canonically $q$-deformable basis of the Aomoto complex 
requires highly non-trivial relations among $q$-integers such as 
(\ref{eq:qmatrix}). However, as we saw in \S \ref{sec:q}, 
all the relations among $q$-integers 
are derived from the Clebsch-Gordan relations 
(Proposition \ref{prop:qrel}) which 
has an origin in the irreducible decomposition of the 
tensor product of highest weight representations of $\Uqsl$ 
(Remark \ref{rem:cg}). 
\begin{problem}
Can we categorify the $q$-deformation of the Aomoto complex 
in a similar manner that (\ref{eq:cgdec}) categorifies (\ref{eq:cg})? 
\end{problem}

Another interesting topic related to $q$-deformation is the 
notion of 
\emph{cyclic sieving phenomena} (CSP). Many enumerative 
problems are naturally equipped with a cyclic group action. 
The number of objects which are invariant under 
a cyclic subgroup is sometimes computed from 
the $q$-deformation of the original enumerative formula as 
the special value at a root of $1$. 
See \cite{csp} for more details on CSP. 
Universal canonically $q$-deformable bases of the Aomoto 
complex enable us to compute the monodromy eigenspaces 
of the cohomology of a cyclic covering space of $M(\A)$ by 
specializing the $q$-deformation at a root of $1$. 
There are notable parallel structures between CSP and the 
framework of this paper. 
Exploring the relationship between CSP and the topology 
of cyclic covering spaces of arrangements complements 
promises to be an interesting research direction.

\end{document}